\documentclass[11pt]{amsart}

\usepackage[latin1]{inputenc}
\usepackage{amssymb}
\usepackage{latexsym}
\usepackage{longtable}

\newcommand{\Q}{\mathbb{Q}}
\newcommand{\Z}{\mathbb{Z}}
\newcommand{\F}{\mathbb{F}}

\newcommand{\C}{\mathbb{C}}

\newcommand{\Jac}{{\rm Jac\,}}

\newfont{\gotip}{eufb10 at 12pt}

\newcommand{\gp}{\mbox{\gotip p}}

\newtheorem{proposition}{Proposition}
\newtheorem{theorem}{Theorem}

\begin{document}

\title{Computations on Modular Jacobian Surfaces}

\author{Enrique Gonz\'alez--Jim\'enez}
\address{Department de Matem\`atiques, Universitat Aut\`onoma de Barcelona,\\
E-08193 Bellaterra, Barcelona, Spain}

\author{Josep Gonz\'{a}lez}
\address{Escola Universit\`aria Polit\`ecnica de Vilanova i la Geltr\'{u},\\
Av. V\'{\i}ctor Balaguer s/n, E-08800 Vilanova i la Geltr\'{u}, Spain}

\author{Jordi Gu\`{a}rdia}
\address{Escola Universit\`aria Polit\`ecnica de Vilanova i la Geltr\'{u},\\
Av. V\'{\i}ctor Balaguer s/n, E-08800 Vilanova i la Geltr\'{u}, Spain}

\thanks{The first author was supported in part by DGI Grant BHA2000-0180. The second author was supported in part by DGI Grant BFM2000-0794-C02-02. The third author was supported in part by DGICYT Grant BFM2000-0627.}

\maketitle

\begin{abstract}  
We give a method for finding rational equations of genus 2 curves whose jacobians are  abelian varieties $A_f$ attached by Shimura to  normalized newforms  $f \in S_2( \Gamma_0(N))$. We present all the curves corresponding to  principally polarized surfaces $A_f$ for $N\leq500$.
\end{abstract}

\section{Introduction}
Given   a normalized newform  $f=\sum_{n>0}a_n q^n \in S_2( \Gamma_0(N))$, Shimura \cite{shimura71}-\cite{shimura73} attaches to it an  abelian variety $A_f$ defined over $\Q$ of dimension  equal to the degree of the number field $E_f=\Q(\{ a_n\})$. The  Eichler-Shimura congruence makes it possible to compute at every  prime $p\nmid N$  the characteristic polynomial of the Frobenius endomorphism    acting on the Tate module  of $A_f/\F_p$ from the coefficient $a_p$ and its Galois conjugates.
In consequence, when $A_f$ is $\Q$-isogenous to the jacobian  of a curve $C$ defined over $\Q$,   the
number of points of the reduction of this curve  mod a prime  $p$ of good reduction can be obtained from the characteristic polynomial of the Hecke operator $T_p$ acting on $H^0(A_f,\Omega^1)$. Among these {\it jacobian-modular curves}, those  which are  hyperelliptic of low genus  are especially interesting for public key cryptography.

As an optimal quotient of the jacobian of $X_0(N)$,   $J_0(N)$, the abelian variety $A_f$   has    a natural  polarization  induced from $J_0(N)$. 
We will focus our attention on polarized surfaces $A_f$ which are $\Q$-isomorphic to jacobians of genus $2$ curves. Wang \cite{wang95} gave a first step in the determinations of such curves. More precisely, using modular symbols he computed the periods of  $f$ and its Galois conjugate    and    presented  $A_f$ as a complex torus with an explicit polarization. For those  principally polarized $A_f$, Wang   computed numerically Igusa invariants by means of even  Thetanullwerte and built an hyperelliptic curve $C/\Q$ such that $\Jac C\simeq A_f$ over $\overline{\Q}$. The curves $C$ obtained with this procedure have two drawbacks: they have huge coefficients, and, moreover, we only know that their jacobians are $\overline{\Q}$-isomorphic to the corresponding abelian varieties  $A_f$, but we don't know whether they are $\Q$-isomorphic, or even $\Q$-isogenous. Frey and Muller  \cite{frey:muller97}  looked for  a curve $C'/\Q$ among the twisted curves of $C$ such that 
the local factors of  the $L$-series of $\Jac C'$ and $A_f$ 
 agree for all primes less than a large enough bound.

In this paper  we want to go one step further in the
determination of these  jacobian modular surfaces. We describe  a more  arithmetical and 
efficient method, based on odd Thetanullwerte, which  solves the problem up to numerical approximations. Our method provides equations  $C_F:y^2=F(x)$ with $F(x)\in \Q[x]$
such that $\Jac C_F$ or $\Jac C_{-F}$ is  $A_f$. The  sign is chosen using  the Eichler-Shimura congruence.

We  have  implemented  a 
program in {\sc Magma}  to determine modular jacobian surfaces  and  equations for the corresponding curves. We have found all the modular jacobian surfaces of level $N\leq 500$. The equations obtained for the corresponding curves  are presented at the end of the paper. It is remarkable   that almost all of them  are minimal equations over $\Z [1/2]$.

\section{Theoretical foundations}

A polarized abelian variety $(A,\Theta)$  of dimension $g$ defined over  $\C$ can be realized as a complex torus $T_A=\C^g/\Lambda$, where $\Lambda$ is the  period lattice of $A$ with respect to a basis of $H^0(A,\Omega^1)$, with a nondegenerate Riemann form defined on $\Lambda$. We choose a symplectic basis for $\Lambda$, and write it as a $2g\times g$ matrix $\Omega=(\Omega_1|\Omega_2)$. The normalized period matrix
$Z=\Omega_1^{-1}\Omega_2$ satisfies the Riemann conditions $Z=\,{}^tZ$, $Y=\mbox{Im}Z$ is positive definite and  the Riemann theta function:
$$
\theta (z):=\theta (z;Z):=\sum_{n\in {\Z}^{g}}\exp (\pi
i^{t}n.Z.n+2\pi i ^{t}n.z)
$$
is holomorphic in $\C^g$. The values of the Riemann theta function at
2-torsion points are called Thetanullwerte. Historically, only
the even Thetanullwerte, i.e., the values of the theta function
at even 2-torsion points have been studied, since the values at
odd 2-torsion points are always zero. Anyway, the values of the
derivatives of the theta function at the odd 2-torsion points have
nice properties, and also do provide useful geometrical
information (\cite{guardia00}).

We now give the theoretical results which allow  one to recognize when a principally polarized abelian surface is the jacobian of a genus 2 curve.

\begin{proposition}
Let $(A,\Theta)$ be an irreducible principally polarized abelian
surface defined over  a number field $K$. There exists a
hyperelliptic curve $C$ of genus 2 defined over $K$ such that
$A=\Jac C$.
\end{proposition}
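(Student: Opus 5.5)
The argument splits into a geometric part over $\overline{K}$ and a descent part from $\overline{K}$ to $K$.

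\emph{Step 1: the case over $\overline{K}$.} The principal polarization $\Theta$ is given by an effective divisor, unique up to translation, whose class is defined over $K$; take $D$ to be a symmetric representative, so $D$ is a curve of arithmetic genus $2$ (since $D^2=2$). By the classical dichotomy for principally polarized abelian surfaces, either $D$ is a smooth curve of genus $2$, or $D$ is reducible, namely the sum of two elliptic curves meeting in one point, in which case $(A,\Theta)$ is a product of elliptic curves with the product polarization. Irreducibility of $(A,\Theta)$ excludes the second case, so $D$ is a smooth genus $2$ curve. By the universal property of the Albanese/Jacobian together with Torelli, the inclusion $D\hookrightarrow A$ induces an isomorphism $\Jac D\cong A$ of principally polarized abelian varieties. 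Every genus $2$ curve is hyperelliptic.

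\emph{Step 2: descent to $K$.} A priori $D$ is defined only over $\overline{K}$: the divisor class is $K$-rational, but a $K$-rational divisor in that class need not exist. To repair this, let $\kappa$ be the symmetric theta structure. The symmetric theta divisors form a torsor under $A[2]$, so for each $\sigma\in\mathrm{Gal}(\overline{K}/K)$ we get ${}^\sigma D = D + t_\sigma$ with $t_\sigma\in A[2]$. This defines a cocycle with values in $A[2]$, and the curves $D_\sigma$ are all isomorphic to $D$ via translation. Hence $D$ gives a $\overline{K}/K$-form problem, which I would approach at the level of moduli. The Torelli map from $\mathcal{M}_2$ to the moduli of principally polarized abelian surfaces is an isomorphism onto the locus of non-product ones as stacks, after accounting for the automorphism $-1$, which lifts to the hyperelliptic involution. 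For each automorphism $\alpha$ of $(A,\Theta)$ there is a unique automorphism of $C$ inducing it, up to the hyperelliptic involution, and $-1$ corresponds to the hyperelliptic involution. Therefore the Galois descent datum $\{\phi_\sigma : {}^\sigma A \to A\}$, which is trivial because $A$ is defined over $K$, yields isomorphisms $\psi_\sigma : {}^\sigma C \to C$ inducing $\phi_\sigma$, unique up to $\pm$. The obstruction to choosing them as a cocycle lies in $H^2(G_K,\{\pm1\})$. I would avoid this obstruction by using the precise form of Torelli: a polarized isomorphism $\Jac C\to\Jac C'$ is induced by a \emph{unique} $\psi$ up to $\pm$, and $-1$ on Jacobians is the hyperelliptic involution, so on both sides the ambiguity is exactly $\{\pm1\}$. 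Rigidifying $A$ by its identity $\phi_\sigma=\mathrm{id}$ and choosing $\psi_\sigma$ equal to the induced map restricted to the Abel–Jacobi image $D\subset A$, namely translation by $t_\sigma$, gives a genuine cocycle, because translations compose strictly. This produces a $K$-model $C$ with $\Jac C\cong A$ over $K$, with the polarization induced from $\Theta$.

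The main obstacle is Step 2, specifically verifying that the induced maps form an honest cocycle, since the paper's final claim is a $K$-isomorphism. If this becomes delicate, my fallback is to cite the known result that a non-product principally polarized abelian surface over $K$ is the Jacobian of a curve over $K$ (Weil's descent, as in Cardona–Quer/Mestre style arguments). I would also note that the resulting $C$ may have no $K$-rational Weierstrass point; it is still hyperelliptic over $K$, since the canonical map $C\to\mathbb{P}^1$ is $K$-rational onto a conic. If the conic is split, $C$ has an equation $y^2=F(x)$ over $K$.
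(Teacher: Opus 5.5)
Your Step 1 uses the same geometric input as the paper: over $\C$, an irreducible $(A,\Theta)$ is the Jacobian of the smooth genus $2$ curve $\Theta$ via Abel--Jacobi. Your descent to $K$, however, is genuinely different, and your proof is correct. The paper reads ``$(A,\Theta)$ defined over $K$'' as saying that the divisor $\Theta$ itself is $K$-rational, and simply takes $C=\Theta$. That is a one-line argument, and it also gives a $K$-embedding $C\hookrightarrow A$. You assume only that the polarization is $K$-rational, which is weaker: for $A=\Jac C$, a $K$-rational theta divisor amounts to a point of $\mathrm{Pic}^1_C(K)$, which can be empty. You then descend a symmetric representative $D$ along the cocycle $t_\sigma\in A[2]$, ${}^\sigma D=D+t_\sigma$. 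This works, since the translations $f_\sigma:{}^\sigma D\to D$ satisfy $f_{\sigma\tau}=f_\sigma\circ{}^\sigma f_\tau$. The line you should make explicit is that each $f_\sigma$ induces the identity on $A=\mathrm{Alb}^0$, because $\sum n_i(P_i+t_\sigma)=\sum n_iP_i$ when $\sum n_i=0$. That is what gives $\Jac C\cong A$ over $K$ rather than a twist. The price is that your $C$ need not sit inside $A$ over $K$. Two side remarks in your Step 2 should be corrected. First, the lift $\psi_\sigma$ of a polarized isomorphism is unique, not unique up to sign, because composing with the hyperelliptic involution changes the induced map to its negative. So there is no class in $H^2(G_K,\{\pm1\})$ to worry about (that is Serre's obstruction for non-hyperelliptic curves), and uniqueness alone already forces the cocycle condition. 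Second, in genus $2$ the canonical map lands in $\mathbb{P}(H^0(C,\Omega^1)^\vee)\cong\mathbb{P}^1_K$, so there is no possibly non-split conic. A model $y^2=F(x)$ over $K$ always exists, as in the proposition quoted in Step 1 of Section 3.
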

\noindent
{\bf Proof:} 
It is well known that the irreducibility of $A$ implies that
$A=\Jac C$ for a certain hyperelliptic curve $C$ defined over
$\mathbb{C}$. But for genus 2 curves, the Abel-Jacobi map in
degree 1 is an isomorphism between the curve $C$ and the $\Theta$
divisor in $\Jac C=A$. Hence, we can assume that $C=\Theta$, which
is defined over $K$.\qed

\begin{proposition}
\label{irred-g=2} A principally polarized abelian surface
$(A,\Theta)$ is not
 irreducible if and only if there is an even
2-torsion point $P$ such that the corresponding even
Thetanullwerte vanishes.
\end{proposition}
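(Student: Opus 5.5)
The plan is to translate irreducibility into a statement about the symmetric theta divisor and then count its $2$-torsion points of odd and even multiplicity. Normalize $\Theta$ as the zero divisor of $\theta(z;Z)$. It is symmetric, so its multiplicity at a $2$-torsion point $P$, corresponding to a half-period $\frac12(Za+b)$, has the parity of the characteristic $[a,b]$. This follows from the parity of the theta function with characteristics $\theta[a,b](-z)=(-1)^{{}^ta\cdot b}\theta[a,b](z)$ together with the usual translation formula relating $\theta[a,b](z)$ to $\theta(z+\frac12(Za+b))$. Consequently, the even Thetanullwerte attached to $P$ vanishes exactly when $\Theta$ passes through $P$ with even, and hence $\geq 2$, multiplicity. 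In dimension $2$ there are $10$ even and $6$ odd characteristics.

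For the easy direction, suppose $(A,\Theta)$ is not irreducible. For a principally polarized surface this means $(A,\Theta)\simeq(E_1\times E_2,\ E_1\times\{0\}+\{0\}\times E_2)$ up to translation by a symmetric point, equivalently $Z$ is $Sp_4(\Z)$-equivalent to a diagonal matrix. Then $\theta(z;Z)=\theta_1(z_1;\tau_1)\theta_2(z_2;\tau_2)$ factors. Each one-variable theta with a characteristic vanishes only at its odd characteristic $[1,1]$. Taking the product of the two odd characteristics gives a characteristic $[(1,1),(1,1)]$, which is even and whose Thetanullwerte vanishes: it is the point $(P_1,P_2)$ where the two elliptic curve components of $\Theta$ meet. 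The vanishing of some even Thetanullwerte is invariant under the symplectic action, because $Sp_4(\Z)$ permutes the even characteristics up to nonzero factors by the theta transformation formula. This proves the implication.

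For the converse, suppose $A$ is irreducible. Then $\Theta$ is an irreducible curve with $\Theta^2=2$. By the argument in the previous proposition, $\Theta$ is a smooth genus $2$ curve $C$, embedded by Abel--Jacobi and made symmetric by choosing the base divisor as a theta characteristic. Because $\Theta$ is smooth, every point of $\Theta$ has multiplicity exactly $1$, which is odd. So no even $2$-torsion point can lie on $\Theta$, and all $10$ even Thetanullwerte are nonzero. As a consistency check, the odd $2$-torsion points of $\Theta$ correspond to the $6$ Weierstrass points of $C$, which matches the $6$ odd characteristics.

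I expect the main obstacle to be the smoothness of $\Theta$ in the irreducible case, that is, the step where "irreducible" is turned into "$\Theta$ is a smooth genus 2 curve." I would handle it with the adjunction formula on the abelian surface. It gives $p_a(\Theta)=1+\Theta^2/2=2$. An irreducible curve of arithmetic genus $2$ that is singular has geometric genus $\leq 1$. Its normalization would then be rational or elliptic, and it cannot generate $A$. A rational curve cannot lie on $A$ at all. An elliptic curve would contradict irreducibility, since it would be an abelian subvariety with $\Theta$ a translate of it, and then $\Theta^2=0\neq 2$. Carrying this out cleanly is the core of the argument. The multiplicity-parity correspondence, by contrast, is a routine computation with the transformation formulas.
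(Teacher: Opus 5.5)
Your proposal is correct and follows the paper's proof. The reducible direction is exactly the paper's argument: factor $\theta_A=\theta_1\theta_2$ and evaluate at the even point built from the two odd points. The irreducible direction goes through $A\simeq\Jac C$, as in the paper. The paper only asserts that a genus~2 Jacobian has no vanishing even Thetanullwert; your parity-of-multiplicity argument plus the smoothness of $\Theta$ justifies this, and your $Sp_4(\Z)$-invariance remark covers the choice of symplectic basis, which the paper does not address.
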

\noindent {\bf Proof:}
If $(A,\Theta)$ is irreducible  principally
polarized, then it is isomorphic to the jacobian of a
hyperelliptic genus 2 curve, and hence every even Thetanullwerte
is non-zero.

 Conversely, assume that $(A,\Theta)$ is
the product of two elliptic curves $E_1,
E_2$. This means that
the theta function $\theta_A$ associated to the pair $(A,\Theta)$
is equal to $\theta_1\theta_2$, where we denote by $\theta_i$ the
theta function associated to the elliptic curve $E_i$. Let $O_i$
be the
 zero point in $E_i$, which is  the unique odd 2-torsion
point in $E_i$. The pair $O=(O_1,O_2)\in E_1\times E_2$ gives an
even two torsion point in $A$, which satisfies $\theta_A(O)=0$.\qed

Once we know that a principally polarized abelian surface $A$ is a
jacobian, we want a method to find a curve $C$ such that $A
\simeq\Jac C$. We would like to be careful enough to assure that,
when $A$ is defined over a number field $K$, the curve $C$ and the
isomorphism $A \simeq\Jac C$ are also defined over $K$. 
 The following result, which can be found in
 \cite{guardia00}, will be basic  for our purpose.

\begin{theorem}
\label{solucio-sextica}Let $
F(x)=a_{6}X^{6}+a_{5}X^{5}+\dots+a_{1}X+a_{0}\in \C[X] $ be a
separable polynomial of degree 5 or 6. Let $\Omega =(\Omega
_{1}|\Omega _{2})$ be the period matrix of  the hyperelliptic
curve $ C_{F}: Y^2=F(X)$ with respect to the basis
$\displaystyle\omega _{1}=\frac{dx}{y}$,   $\displaystyle\omega _{2}=%
\frac{xdx}{y}$ of $H^{0}(C_{F,}\Omega ^{1})$ and  any
symplectic
basis of $%
H_{1}(C_{F},\mathbb{Z})$, and take $Z_{F}=\Omega _{1}^{-1}\Omega
_{2}$.
\begin{itemize}
\item[a)] The   roots $\alpha_k$ of the polynomial $F$
are the ratios $\displaystyle \frac{x_{k,2}}{x_{k,1}}$, given by
the solutions $(x_{k,1},x_{k,2})$ of the six homogeneous linear
equations
$$
\left(
\begin{array}{ll}
\displaystyle\frac{\partial \theta }{\partial z_{1}}(w_{k}) &
\displaystyle\frac{\partial \theta }{%
\partial z_{2}}(w_{k})
\end{array}
\right) \Omega _{1}^{-1}\left(
\begin{array}{c}
X_{1} \\
X_{2}
\end{array}
\right) =0,
$$
where $w_1,\dots,w_6$ are the six odd 2-torsion points of
$J(C_{F})$, given by
$$
\begin{array}{ll}
w_{1}=\frac{1}{2}Z_{F}{\left(
\begin{array}{l}
0 \\
1
\end{array}
\right)} +\frac{1}{2}{\left(
\begin{array}{l}
0 \\
1
\end{array}
\right)} , & w_{2}=\frac{1}{2}Z_{F}{\left(
\begin{array}{l}
0 \\
1
\end{array}
\right)} +\frac{1}{2}{\left(
\begin{array}{l}
1 \\
1
\end{array}
\right)} , \\
w_{3}=\frac{1}{2}Z_{F}\left(
\begin{array}{l}
1 \\
0
\end{array}
\right) +\frac{1}{2}\left(
\begin{array}{l}
1 \\
0
\end{array}
\right) , & w_{4}=\frac{1}{2}Z_{F}\left(
\begin{array}{l}
1 \\
0
\end{array}
\right) +\frac{1}{2}\left(
\begin{array}{l}
1 \\
1
\end{array}
\right) , \\
w_{5}=\frac{1}{2}Z_{F}\left(
\begin{array}{l}
1 \\
1
\end{array}
\right) +\frac{1}{2}\left(
\begin{array}{l}
0 \\
1
\end{array}
\right) , & w_{6}=\frac{1}{2}Z_{F}\left(
\begin{array}{l}
1 \\
1
\end{array}
\right) +\frac{1}{2}\left(
\begin{array}{l}
1 \\
0
\end{array}
\right).
\end{array}
$$
When $\deg F=5$, one of these ratios is infinity and we discard it.
\item[b)] Let $W_j=(\alpha_j,0)$ be the 
Weierstrass point corresponding to $w_j$. Denote by
$H[W_{j}]$ the hyperplane of $\mathbb{P}^{1}$ given by the
equation
$$H[W_{j}](X_{1},X_{2}):=\left(
\begin{array}{ll}
\displaystyle\frac{\partial \theta }{\partial z_{1}}(w_j) &
\displaystyle\frac{\partial \theta }{\partial z_{2}}(w_j)
\end{array}
\right) \Omega _{1}^{-1}\left(
\begin{array}{c}
X_{1} \\
X_{2}
\end{array}
\right).
$$
The discriminant $\Delta _{alg}(C_{F})$ of the polynomial $F$ satisfies the relation
$$
\begin{array}{ll}
\Delta _{alg}(C_{F})^{7}=2^{120}a_{6}^{10}\pi ^{60}\det \Omega
_{1}^{-30}\prod_{j<k}H[W_{j}](1,\alpha _{k})^{2} & \mbox{if
$\deg(F)=6;$}\\
\\
 \Delta _{alg}(C_{F})^{5}=2^{80}a_{5}^{10}\pi ^{80}\det \Omega
_{1}^{-20}\prod_{j<k}H[W_{j}](1,\alpha _{k})^{2} & \mbox{if $\deg(F)=5.$}\\
\end{array}
$$
\end{itemize}
\end{theorem}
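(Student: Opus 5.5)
For part a) I would use Riemann's vanishing theorem together with the geometry of the Abel--Jacobi map. Fix a base point and let $u:C_F\to \Jac C_F=\C^2/\Omega_1^{-1}\Lambda$ be the Abel--Jacobi map normalized by $Z_F$, shifted by the Riemann constant $\kappa$. Then the theta divisor $\Theta=\{\theta=0\}$ is the translate $u(C_F)+\kappa$. For genus 2 the odd theta characteristics correspond exactly to the six Weierstrass points: $w_k=u(W_k)+\kappa$ for a suitable labelling, and $\theta$ vanishes at $w_k$ to order exactly one. I would check the labelling of the listed $w_k$ by the standard parity computation ${}^t\epsilon\,\epsilon'\equiv 1 \pmod 2$, which holds for the six characteristics listed.

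Since $\Theta$ is smooth at $w_k$ (it is isomorphic to $C_F$), the gradient $(\partial_{z_1}\theta(w_k),\partial_{z_2}\theta(w_k))$ is nonzero and is a conormal vector to $\Theta$ at $w_k$. In the coordinates $z=\Omega_1^{-1}(\int\omega_1,\int\omega_2)$, the tangent direction of $u(C_F)$ at $W_k$ is $\Omega_1^{-1}(\omega_1,\omega_2)(W_k)$. In the local parameter $y$ at $W_k=(\alpha_k,0)$ this is proportional to $\Omega_1^{-1}\,{}^t(1,\alpha_k)$, because $dx/y$ and $x\,dx/y$ have ratio $x$. Hence the gradient annihilates $\Omega_1^{-1}{}^t(1,\alpha_k)$, so $(1,\alpha_k)$ solves the stated linear equation. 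Because the gradient is nonzero, the solution space is one-dimensional, and the ratio $x_{k,2}/x_{k,1}$ equals $\alpha_k$. When $\deg F=5$ the point at infinity is a Weierstrass point, its tangent direction is $\Omega_1^{-1}{}^t(0,1)$, and this gives the infinite ratio.

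For part b) I would combine two ingredients. The first is Thomae's formula (or Rosenhain's derivative formula) expressing the even Thetanullwerte in terms of $\det\Omega_1$, $\pi$ and differences $\alpha_j-\alpha_k$. The second is the evaluation of $H[W_j](1,\alpha_k)$. From part a), $H[W_j](X_1,X_2)=c_j(\alpha_jX_1-X_2)$ for a constant $c_j$, so $H[W_j](1,\alpha_k)=c_j(\alpha_j-\alpha_k)$.

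To find $c_j$, I would use the Jacobi-type derivative formula in genus 2: the Jacobian determinant of two odd theta gradients equals $\pi^2$ times a product of four even Thetanullwerte (Rosenhain). Independently, that determinant is $c_jc_k(\alpha_k-\alpha_j)\det\Omega_1^{-1}$ up to sign. Combining these with Thomae's formula, where $\theta[\eta]^4\propto \det\Omega_1^{2}\pi^{-4}\prod(\alpha_a-\alpha_b)$ over one partition of the roots, I can solve for each $c_j^2$ as a monomial in $\det\Omega_1$, $\pi$, $a_6$ and root differences. I then take the product over all $j<k$ and collect the exponents. The differences should assemble into a power of $\prod_{a<b}(\alpha_a-\alpha_b)^2$, and multiplying by $a_6^{10}$ turns this into the stated power of $\Delta_{alg}=a_6^{10}\prod(\alpha_a-\alpha_b)^2$. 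For degree 5 I would either repeat the computation or take the limit with one root tending to infinity.

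The main obstacle is this bookkeeping: tracking the sign and normalization constants in Thomae/Rosenhain, including the factor $2^{120}$ and the exact powers of $\pi$ and $\det\Omega_1$. I would first fix the constants by a homogeneity check, scaling $F\mapsto\lambda F$ and $x\mapsto \mu x+\nu$. Under these substitutions $\Omega_1$ and $\Delta_{alg}$ transform predictably, which pins down every exponent, so only the absolute constant $2^{120}$ remains to be taken from Thomae's formula.
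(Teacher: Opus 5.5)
The paper does not prove this theorem; it quotes it from \cite{guardia00}, so your argument has to stand on its own.

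Part a) does stand. You use $\Theta=u(C_F)+\kappa$ and $w_k=u(W_k)+\kappa$, $\theta$ vanishes to order one there, and $\nabla\theta(w_k)$ annihilates the tangent vector $\Omega_1^{-1}\,{}^t(1,\alpha_k)$ (resp.\ $\Omega_1^{-1}\,{}^t(0,1)$ at infinity).

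The genuine gap is in b). The decisive ``bookkeeping'' is deferred, and it cannot end at the printed identity. Your own homogeneity test shows this. Replace $F$ by $\lambda F$ and transport the symplectic basis by $(x,y)\mapsto(x,\sqrt{\lambda}\,y)$. Then $Z_F$, the $w_j$, $\nabla\theta(w_j)$ and the $\alpha_k$ are unchanged and $\Omega_1\mapsto\lambda^{-1/2}\Omega_1$, so every $H[W_j](1,\alpha_k)^2$ gains a factor $\lambda$. The right-hand side of the sextic formula therefore scales by $\lambda^{10+30+15}=\lambda^{55}$, while $\Delta_{alg}(C_F)^7$ scales by $\lambda^{70}$.

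The same failure shows up in the root differences. Write $H[W_j](X_1,X_2)=c_j(\alpha_jX_1-X_2)$. The product over $j<k$ weights $c_1,\dots,c_6$ with exponents $10,8,\dots,2,0$, whereas your method gives $c_j^4$ proportional to $\pi^{-4}a_6^4\det\Omega_1^2\prod_{a<b}(\alpha_a-\alpha_b)/F_0'(\alpha_j)$. So the root differences do not assemble into a power of the discriminant. What Rosenhain plus Thomae actually prove is the formula with the product over \emph{ordered} pairs $j\neq k$, i.e.\ $\prod_{j<k}\bigl(H[W_j](1,\alpha_k)H[W_k](1,\alpha_j)\bigr)^2$. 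This version passes both scaling tests and is presumably what is meant.

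There is also a slip. Since $\nabla\theta(w_j)=c_j(\alpha_j,-1)\,\Omega_1$, the Jacobian determinant is $c_jc_k(\alpha_k-\alpha_j)\det\Omega_1$, not $c_jc_k(\alpha_k-\alpha_j)\det\Omega_1^{-1}$. With your version the exponent of $\det\Omega_1$ comes out wrong.

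For the corrected statement you need not solve for the individual $c_j^2$; Thomae only gives fourth powers, hence $c_j^4$ up to sign. Instead:
\begin{itemize}
\item Square Rosenhain: $(c_jc_k)^2(\alpha_j-\alpha_k)^2\det\Omega_1^2=\pi^4\prod\theta[\epsilon]^2$, over the four even characteristics whose partition puts $j,k$ on the same side.
\item Each even characteristic occurs for exactly $6$ pairs, so $\prod_{j\neq k}H[W_j](1,\alpha_k)^2=\pi^{60}\det\Omega_1^{-30}\prod_{a<b}(\alpha_a-\alpha_b)^2\prod_{\epsilon}\theta[\epsilon]^{12}$.
\item Each pair $\{a,b\}$ lies on one side in $4$ of the $10$ partitions, so Thomae gives $\prod_\epsilon\theta[\epsilon]^{4}$ as a constant times $\pi^{-40}a_6^{20}\det\Omega_1^{20}\prod_{a<b}(\alpha_a-\alpha_b)^4$.
\item Cubing yields, up to sign, $\Delta^7=C\,a_6^{10}\pi^{60}\det\Omega_1^{-30}\prod_{j\neq k}H[W_j](1,\alpha_k)^2$, where $C$ is the $30$th power of the inverse of the numerical constant in Thomae's formula.
\end{itemize}

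Two further points need handling.
\begin{itemize}
\item Rosenhain and Thomae concern $\theta[\eta](z)$ at $z=0$. For $w_j=Z_Fa_j+b_j$ one has $\nabla\theta[\eta_j](0)=e^{\pi i\,{}^ta_jZ_Fa_j+2\pi i\,{}^ta_jb_j}\,\nabla\theta(w_j)$. This is harmless in a), but in b) it changes the product by $\exp(-10\pi i(Z_{11}+Z_{12}+Z_{22}))$. So b) holds only with the derivatives read as Jacobian Nullwerte $\partial\theta[\eta_j]/\partial z_i(0)$.
\item In degree $5$, the limit $\alpha_6\to\infty$ of the sextic identity still involves $\Delta^7$ and the point at infinity, so it does not give the printed shape. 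Count directly over the five finite roots instead: each even characteristic now occurs for $4$ finite pairs, giving $\prod\theta^8$. This yields $\pi^{40}$, not $\pi^{80}$, next to $\det\Omega_1^{-20}$, since these two always enter as $\det\Omega_1/\pi^2$. The product is again over ordered pairs, and for the factor $a_5^{10}$ to be right, $\Delta$ must be the discriminant of $F$ as a binary sextic, $a_5^{10}\prod_{i<j}(\alpha_i-\alpha_j)^2$.
\end{itemize}
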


\section{Determination of hyperelliptic equations}

We explain here how one can, 
given  an irreducible 
abelian surface $(A, \Theta)$ defined over $K$, look  for a hyperelliptic curve $C_F:Y^2=F(X)$   such that 
$A$ is $K$-isomorphic to $\Jac C_F$. We have divided our method into four steps.

\vskip 0.2cm
\noindent
{\bf Step 1: Period matrix.}
The first step consists in choosing a suitable period matrix $\Omega$ for $A$. We have to fix  a symplectic basis of $H_1(A,\Z)$,   a convenient basis of $H^0(A,\Omega^1_{A/K})$ and  compute the corresponding period matrix.
 The following result assures us that the basis of regular differentials  can be chosen  arbitrarily.
\begin{proposition}(\cite{gogo00}).
Let $C/K$ be a genus $2$ curve. For  every linearly independent
pair of regular differentials  $\omega_1,\omega _2\in
H^0(C,\Omega^1_{C/K})$, there exists a polynomial $F(X)\in K[X]$ of degree $5$
or $6$ without double roots such that the functions on $C$ given
by
$$
x=\frac{\omega_1}{\omega_2}\,,\quad  y=\frac{ dx}{\omega_2}
$$
satisfy the equation $y^2= F(x)$.
\end{proposition}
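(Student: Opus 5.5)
The approach is the standard canonical-map argument for genus $2$ curves. Since $C$ has genus $2$, $\dim_K H^0(C,\Omega^1_{C/K})=2$, so $\omega_1,\omega_2$ form a $K$-basis. The canonical map $\phi=(\omega_1:\omega_2):C\to\mathbb{P}^1$ is therefore defined over $K$, has degree $2g-2=2$, and is the hyperelliptic double cover. The first step is to note that $x=\omega_1/\omega_2$ is exactly the affine coordinate of $\phi$. Hence $x\in K(C)$, $[K(C):K(x)]=2$, and the nontrivial automorphism of $K(C)/K(x)$ is the hyperelliptic involution $\iota$.

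Next we determine how $\iota$ acts. It acts as $-1$ on $H^0(C,\Omega^1)$ (the quotient $\mathbb{P}^1$ carries no regular differentials), and it fixes $x$, so $dx$ is also fixed. Consequently $y=dx/\omega_2$ satisfies $\iota^*y=-y$. This forces $y\notin K(x)$, so $K(C)=K(x,y)$, while $y^2$ is $\iota$-invariant and hence $y^2=F(x)$ for some $F\in K(x)$.

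The main work is to show that $F$ is a polynomial of degree $5$ or $6$ without repeated roots, which we do by computing divisors.

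\begin{itemize}
\item \emph{Divisor of $\omega_2$.} Write $\mathrm{div}(\omega_2)=\phi^*(\infty)$, where $\infty$ is the point $(1:0)$, i.e. the common zeros of $\omega_2$ are the fibre over $x=\infty$. This is an effective canonical divisor of degree $2$.
\item \emph{Divisor of $dx$.} By Riemann--Hurwitz, $\mathrm{div}(dx)=R-2\phi^*(\infty)$, where $R$ is the ramification divisor, the sum of the six Weierstrass points.
\item \emph{Divisor of $y$.} Combining the two gives $\mathrm{div}(y)=R-3\phi^*(\infty)$. So $y$ is regular on the affine part $x\neq\infty$, with simple zeros exactly at the Weierstrass points lying there.
\item \emph{Polarity of $F(x)$.} It follows that $F(x)=y^2$ has poles only over $\infty$, so $F$ is a polynomial. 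Its divisor is $2R-6\phi^*(\infty)$.
\end{itemize}

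To read off the degree of $F$, split into two cases according to whether $\infty$ is a branch point of $\phi$.

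\begin{itemize}
\item \emph{$\infty$ not a branch point.} Then $\phi^*(\infty)=P+P'$ with $P\neq P'$. The pole order of $F(x)$ at $P$ is $6$, and $x$ has a simple pole there, so $\deg F=6$. The zeros of $F(x)$ are the six Weierstrass points, each of order $2$, where $x-\alpha$ is a local parameter squared. So the six roots are simple.
\item \emph{$\infty$ a branch point.} Then $\phi^*(\infty)=2W$ and $x$ has a pole of order $2$ at $W$. The pole order of $F(x)$ at $W$ is $6\cdot 2-2=10$, which gives $\deg F=5$. The remaining five Weierstrass points give five simple roots.
\end{itemize}

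The delicate points are keeping track of the Riemann--Hurwitz contribution at ramified points lying over $\infty$, and checking that $x$ and $y$ really are defined over $K$. The latter is immediate because $\omega_1,\omega_2$ are $K$-rational differentials, so $F\in K[X]$.
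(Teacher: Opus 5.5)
The paper gives no proof of this proposition; it only cites it, so there is no in-text argument to compare with. Your proof is correct and complete. The divisor bookkeeping $\mathrm{div}(\omega_2)=\phi^*(\infty)$, $\mathrm{div}(dx)=R-2\phi^*(\infty)$, $\mathrm{div}(y)=R-3\phi^*(\infty)$ is right. The pole-order comparison ($6$ against $1$ at an unramified point over $\infty$, $10$ against $2$ at a ramified one) gives exactly degree $6$ or $5$. Together with $\mathrm{ord}_W F(x)=2\,\mathrm{ord}_\alpha F$ at a Weierstrass point $W$ over $\alpha$, it also gives simple roots.

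You use three routine facts tacitly:
\begin{itemize}
\item $\omega_1,\omega_2$ have no common zero, since Riemann--Roch gives $\ell(K_C-P)=1$ for every $P$. This is what justifies $\mathrm{div}(\omega_2)=\phi^*(\infty)$.
\item An $\iota$-invariant regular differential descends to a \emph{regular} differential on $\mathbb{P}^1$, even at branch points: if $g(t^2)\,2t\,dt$ is regular then $g$ is regular. This is what gives $\iota^*=-1$.
\item $\mathrm{char}\,K\neq 2$, needed for $K(C)/K(x)$ to be Galois and for the tame form of Riemann--Hurwitz. This is harmless here, since $K$ is a number field.
\end{itemize}

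For comparison, there is a shorter computational proof if you assume a Weierstrass model $v^2=G(u)$ over $K$. Write $\omega_1=(au+b)\,du/v$ and $\omega_2=(cu+d)\,du/v$ with $\delta=ad-bc\neq 0$. Then $x=(au+b)/(cu+d)$, $y=\delta v/(cu+d)^3$, and
$$y^2=\delta^{-4}(a-cx)^6\,G\Bigl(\frac{dx-b}{a-cx}\Bigr).$$
This is the $\mathrm{GL}_2$-transform of the binary sextic attached to $G$, so it visibly has degree $5$ or $6$ and distinct roots. That route gives $F$ explicitly from a known model. However, it presupposes a model over $K$, which is usually proved by exactly the canonical-map argument you give. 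Your intrinsic proof establishes both the existence of the model and the shape of $F$ at once.
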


\noindent
{\bf Step 2: Weierstrass points.}  In this step, we compute  the roots
$\alpha_k$ of the polynomial $F$ given by the first part of the theorem
\ref{solucio-sextica}, and we take  the   monic polynomial
$F_0(X)=\prod_k(X-\alpha_k)\in K[X]$.

\vskip 0.2cm
\noindent
 {\bf Step 3: Leading
coefficient.}
 With the formulas given for the discriminant in part  {\it b)} of theorem \ref{solucio-sextica}, we
obtain  $a_6^{10}\in K$ (or $a_5^{10}\in K$ if $\deg F_0=5$). We choose one of the tenth roots  $a_6'\in K$ of this value 
and take  the polynomial $F_1(X)=a_6' F_0(X)\in K[X]$. 

\vskip 0.2cm
\noindent
{\bf Step 4: Hyperelliptic equation.}
At this point,   it only remains to find the tenth root of unity $\zeta$ such that  $F=\zeta F_1$. 
Since  the curves  $C_F$ and $C_{\lambda^2 F}$ with $\lambda\in K^{*}$ are $K$-isomorphic,  it suffices to consider only the cases $\zeta =1$ and $\zeta=-1$, when $-1\notin K^2$.  First we  check  whether  $C_{F}$ and $C_{-F}$ are $K$-isomorphic. If they are not,  then we look if $\Jac C_F$ and $\Jac C_{-F}$ are not 
$K$-isogenous.  In this case,  by Faltings Theorem, only one of their $L$-series will agree with the $L$-series of $A$ and this will give the right sign for $F=\pm F_1$. 
 In fact, it will  suffice  to find   a prime $\gp$ in $K$ 
 of good reduction for the  curves $C_F$ and $C_{-F}$
such that their reductions mod $\gp$ have a different number of points.

In the case that $C_F$ and $C_F$ are not $K$-isomorphic and $\Jac C_F$ and $\Jac C_{-F}$ are $K$-isogenous, we cannot determine the right sign. Anyway, we know that both jacobians $\Jac C_F$ and $\Jac C_{-F}$  are $K(\sqrt{-1})$-isomorphic to $A_f$, and one of them is
$K$-isomorphic.

\section{Modular computations}

We  apply  the method described in the previous section to 
 present the irreducible principally polarized 
two-dimensional factors of $J_0(N)^{\mbox{new}}$ as jacobians of curves, for $N\le 500$.

In order to do this, we begin
looking for the normalized newforms $f=\sum a_nq^n\in S_2(\Gamma_0(N))$ such
that the number field $E_f=\Q(\{a_n\})$ is quadratic. For each of these
newforms, we 
 take an integral basis of the $\C$-vector space generated by
$f$ and its Galois conjugate ${}^\sigma f$.  We also determine a symplectic basis of
$H_1(A_f,\Z)$. If $A_f$ is principally polarized, we compute the period matrix
with respect to these bases, using  the
package on modular symbols written by W. Stein in {\tt Magma}. 

Next, we check the irreducibility of $A_f$ by means of proposition \ref{irred-g=2}. We remark that all the  $A_f$ studied are irreducible. 

We now 
apply the method of section 3. 
We  follow the steps  described there, to find the corresponding
curves $C_F: Y^2=F(X)$.  Since we are working over $\Q$, we can change the polynomial $F(X)$
in order to obtain an integral equation. We multiply  $F(X)$
 by  $d=t/b$, where $t\in\Z$ is the square of the l.c.m.
of the denominators of the coefficients of $F$, and $b\in\Z$ is the g.c.d. of
their numerators divided by its maximum square-free factor. 
It is worth remarking that the equations obtained have very small coefficients, even before finding  the integral model.

The only case in which we have found a curve $C_F$ such that $\Jac C_F$ and 
$\Jac C_{-F}$ are $\Q$-isogenous occurs for $N=256$, but in fact both curves are already $\Q$-isomorphic, because  the corresponding polynomial $F(X)$  is odd.

We have used three tests to check the correctness of our equations. First, we have computed the absolute Igusa invariants of the curves $C_F$ in two different ways: algebraically from the coefficients of our equations, and numerically from the even Thetanullwerte of the period matrix. They have agreed to high accuracy in all cases.  Second, we have compared the local factors of the $L$-series of $\Jac C_F$ and $A_f$ for all primes $p<100$ not dividing $\Delta_{alg}(C_F)$.
Finally,  we have computed the odd part of the conductor of $C_F$ using  the program {\tt genus2reduction} by Q. Liu.  In all cases, this
odd part agreed with the odd part of the square of the level of the newform $f$, as it should by \cite{carayol86}.  It is worth noting that in almost all cases
our equations are minimal over $\Z[1/2]$.


We  illustrate our computations with an example. The first level for which
$J_0(N)^{\mbox{new}}$ has a proper two-dimensional factor is $N=63$. Using
{\tt Magma} we identify the corresponding normalized newform $f$:
$$
f=
q+\sqrt{3}q^2+q^4-2\sqrt{3}q^5+q^7-\sqrt{3}q^8-6q^{10}+
2\sqrt{3}q^{11}+2q^{13}+\dots
$$
An integral basis of the space $\langle f, {}^\sigma f\rangle$
is 
$$
f_1=q + q^4 + q^7 - 6q^{10} + 2q^{13} +\dots, \qquad \qquad
f_2=q^2 - 2q^5 - q^8 + 2q^{11}+\dots
$$ 
A basis for $H_1(A_f,\Z)$ in terms of modular symbols is given by
$$
\begin{array}{lll}
\gamma_1 &=&  \{-\frac1{24}, 0\} -\{-\frac1{28}, 0\} + \{-\frac1{30}, 0\} -\{-\frac1{51}, 0\} -\{-\frac13,
-\frac27\},
\\ \\
\gamma_2&=&    \{-\frac1{24}, 0\}  -\{-\frac1{28}, 0\} + \{-\frac1{39}, 0\}  -\{-\frac1{57}, 0\} -\{-\frac16,
-\frac17\},
\\ \\
\gamma_3&=&    \{-\frac1{24}, 0\} + \{-\frac1{39}, 0\}  -\{-\frac1{45}, 0\}  -\{-\frac1{60}, 0\}  -\{-\frac13,
-\frac27\} -\{\frac37, \frac49\},     
\\ \\
\gamma_4&=&\{-\frac{1}{36}, 0\} -\{-\frac1{49}, 0\} + \{-\frac1{51}, 0\} -\{-\frac1{54}, 0\} + \{-\frac1{57}, 0\}\\
\\
\multicolumn{3}{r}{-\{-\frac1{60}, 0\}  - \{-\frac1{3}, -\frac27 \}}.    
\end{array}
$$  

Computing the intersection matrix of these paths we see that $A_f$ is principally polarized. 
We find a symplectic basis for $H_1(A_f,\Z)$, and compute the periods of $f_1,
f_2$ with respect to these bases. We obtain as period matrix
$\Omega=(\Omega_1 \mid \Omega_2)$ for  $A_f$: 
$$
\begin{array}{c}
\Omega_1=\left(
\begin{array}{cc}
0.3590439\dots +i* 0.6218823 \dots & -2.2150442\dots + i* 1.2788564\dots \\
-2.2150442\dots + i* 3.8365691\dots & 1.0771318\dots+i*0.6218823\dots
\end{array}\right),
\\
\\
\Omega_2=\left(
\begin{array}{cc}
 -1.4969563\dots+ i* 1.2788564\dots & -1.8560003\dots - i*0.6569740\dots \\
-3.3529566\dots + i* 0.6218823\dots &  -1.1379124\dots +i*3.2146868\dots
\end{array}\right). 
\end{array}
$$
We apply the method described in
section 3, to obtain the monic polynomial 
$$
F_0(x)=x^6 - 54x^3 -27.  
$$
The coefficient $a_6$ is 1/12, so that $F_1(x)=1/12 F_0(x)$. 
The first prime for which the local factors of $C_{F_1}$ and $C_{-F_1}$ are
different is $p=67$.  Comparing with the polynomial 
$$
x^2(x+p/x-a_p)(x+p/x-^\sigma a_p),
$$
we see that the right sign is $-1$. We multiply $-F_1(x)$ by $6^2$ to obtain
an integral equation. We can finally assert that $A_f$ is 
the jacobian of the curve
$$
y^2=-3x^6+162x^3+81.
$$
The Igusa invariants of this curve are
$$
i_1=\frac{2^3\cdot 37^5}{3\cdot  7^3},\qquad i_2=-\frac{3\cdot  37^3\cdot  103}{2\cdot  7^3}\qquad
i_3=-\frac{5\cdot  37^2\cdot  881}{2^3\cdot  7^3}.
$$
We have also computed these Igusa invariants from the even Thetanullwerte associated to the period matrix $Z$, obtaining, of course, the same result.

Using Q. Liu's program, we find a minimal equation for the curve $C$:

$$
Y^2=X^6+54X^3-27,
$$
which is obtained from our equation through the change
$x=3/X$, $y=9 Y/X^3$, which corresponds essentially to a different ordering of
the modular forms $f_1,f_2$ as basis of $\langle f, {}^\sigma f\rangle$.

\section{Tables}

We present the equations  that we have obtained in the following table. We have labelled
the irreducible principally polarized two-dimensional factors $A_f$ of
$J_0(N)^{\mbox{new}}$ as $S_{NX}$. We have ordered the  two-dimensional
factors of $J_0(N)^{\mbox{new}}$ following the output of the {\tt Magma}
function {\tt SortDecomposition}. The letter   $X$  denotes the position of
$A_f$ with respect to this ordering. The third column indicates when we know that the given equation is minimal over $\Z[1/2]$.

\begin{center}
\setlongtables
\begin{longtable}{l@{\,\,}lc}
\hline\noalign{\smallskip}
 $A_f$  & $ \,\,\,C_F : y^2 \,=\, F(x),\quad  \Jac C_F\simeq A_f$ & minimal? \\[.1cm]
\hline\noalign{\smallskip}
\endfirsthead

\hline\noalign{\smallskip}
 $A_f$  & $ \,\,\,C_F : y^2 \,=\, F(x),\quad  \Jac C_F\simeq A_f$ & minimal? \\[.1cm]
\hline\noalign{\smallskip}
\endhead

\hline\noalign{\smallskip}
\endfoot
$S_{23A}$  &  $\,\,\,y^2 \,=x^6 - 8x^5 + 2x^4 + 2x^3 - 11x^2 + 10x - 7$  & yes \\[.1cm]
$S_{29A}$  &  $\,\,\,y^2 \,=x^6 - 4x^5 - 12x^4 + 2x^3 + 8x^2 + 8x - 7$& yes \\[.1cm]
$S_{31A}$  &  $\,\,\,y^2 \,= x^6 - 8x^5 + 6x^4 + 18x^3 - 11x^2 - 14x -3$& yes \\[.1cm]
$S_{63B}$  &  $\,\,\,y^2 \,=-3x^6 + 162x^3 + 81$&  \\[.1cm]
$S_{65B}$  &  $\,\,\,y^2 \,=-x^6 - 4x^5 + 3x^4 + 28x^3 - 7x^2 - 62x + 42$& yes \\[.1cm]
$S_{65C}$  &  $\,\,\,y^2 \,=-15x^6 + 36x^4 - 30x^3 + 72x^2-39$& yes \\[.1cm]
$S_{67B}$  &  $\,\,\,y^2 \,=x^6 + 2x^5 + x^4 - 2x^3 + 2x^2 - 4x + 1$& yes \\[.1cm]
$S_{73B}$  &  $\,\,\,y^2 \,=x^6 - 4x^5 + 2x^4 + 6x^3 + x^2 + 2x + 1 $& yes \\[.1cm]
$S_{87A}$  &  $\,\,\,y^2 \,=x^6 - 2x^4 - 6x^3 - 11x^2 - 6x - 3$& yes \\[.1cm]
$S_{93A}$  &  $\,\,\,y^2 \,=x^6 + 2x^4 - 6x^3 + 5x^2 + 6x + 1$& yes \\[.1cm]
$S_{103A}$  &  $\,\,\,y^2 \,=x^6 + 2x^4 + 2x^3 + 5x^2 + 6x + 1$& yes \\[.1cm]
$S_{107A}$  &  $\,\,\,y^2 \,=x^6 + 2x^5 + 5x^4 + 2x^3 - 2x^2 - 4x - 3$& yes \\[.1cm]
$S_{115B}$  &  $\,\,\,y^2 \,=x^6 + 2x^4 + 10x^3 + 5x^2 + 6x + 1$& yes \\[.1cm]
$S_{117B}$  &  $\,\,\,y^2 \,=x^6 - 10x^3 - 27$& yes \\[.1cm]
$S_{117C}$  &  $\,\,\,y^2 \,=-3x^6 - 12x^4 - 18x^3 - 48x^2-36x - 27$& yes \\[.1cm]
$S_{125A}$  &  $\,\,\,y^2 \,=x^6 + 2x^5 + 5x^4 + 10x^3 + 10x^2 + 8x + 1$& yes \\[.1cm]
$S_{125B}$  &  $\,\,\,y^2 \,=5x^6 - 10x^5 + 25x^4 - 50x^3 + 50x^2 - 40x + 5$& yes \\[.1cm]
$S_{133A}$  &  $\,\,\,y^2 \,=x^6 - 2x^5 + 5x^4 - 6x^3 + 10x^2 - 8x + 1$& yes \\[.1cm]
$S_{133B}$  &  $\,\,\,y^2 \,=-3x^6 - 22x^5 - 35x^4 + 50x^3 + 74x^2 - 100x + 29$& yes \\[.1cm]
$S_{135D}$  &  $\,\,\,y^2 \,=x^6 + 6x^4 - 10x^3 + 9x^2-30x - 11$& yes \\[.1cm]
$S_{147D}$  &  $\,\,\,y^2 \,=x^6 - 4x^4 + 2x^3 + 8x^2 - 12x + 9  $& yes \\[.1cm]
$S_{161B}$  &  $\,\,\,y^2 \,=x^6 + 6x^5 + 17x^4 + 22x^3 + 26x^2 + 12x + 1$& yes \\[.1cm]
$S_{167A}$  &  $\,\,\,y^2 \,=x^6 - 4x^5 + 2x^4 - 2x^3 - 3x^2 + 2x - 3$& yes \\[.1cm]
$S_{175E}$  &  $\,\,\,y^2 \,=x^6 + 2x^5 - 3x^4 + 6x^3 - 14x^2 + 8x - 3$& yes \\[.1cm]
$S_{177A}$  &  $\,\,\,y^2 \,=x^6 + 2x^4 - 6x^3 + 5x^2 - 6x + 1$& yes \\[.1cm]
$S_{177B}$  &  $\,\,\,y^2 \,=-15x^6 - 120x^5 - 530x^4 - 710x^3 - 515x^2-30x + 45$&  \\[.1cm]
$S_{188B}$  &  $\,\,\,y^2 \,=x^5 - x^4 + x^3 + x^2 - 2x + 1$& yes \\[.1cm]
$S_{189E}$  &  $\,\,\,y^2 \,=x^6 - 2x^3 - 27 $& yes \\[.1cm]
$S_{191A}$  &  $\,\,\,y^2 \,=x^6 + 2x^4 + 2x^3 + 5x^2 - 6x + 1 $& yes \\[.1cm]
$S_{205D}$  &  $\,\,\,y^2 \,=x^6 + 2x^4 + 10x^3 + 5x^2 - 6x + 1 $& yes \\[.1cm]
$S_{209B}$  &  $\,\,\,y^2 \,=x^6 - 4x^5 + 8x^4 - 8x^3 + 8x^2 + 4x + 4$& yes \\[.1cm]
$S_{213B}$  &  $\,\,\,y^2 \,=x^6 + 2x^4 + 2x^3 - 7x^2 + 6x - 3$& yes \\[.1cm]
$S_{221C}$  &  $\,\,\,y^2 \,=x^6 - 2x^5 + x^4 + 6x^3 + 2x^2 + 4x + 1$& yes \\[.1cm]
$S_{224C}$  &  $\,\,\,y^2 \,=-2x^6 - 8x^5 - 34x^4 - 48x^3 - 118x^2 + 56x + 154  $& yes \\[.1cm]
$S_{224D}$  &  $\,\,\,y^2 \,=2x^6 - 8x^5 + 34x^4 - 48x^3 + 118x^2 + 56x - 154  $& yes \\[.1cm]
$S_{243C}$  &  $\,\,\,y^2 \,=x^6 + 6x^3 - 27$& yes \\[.1cm]
$S_{250D}$  &  $\,\,\,y^2 \,=\,20\,x^6 - 140\,x^5 + 325\,x^4 +1050\,x^3 + 425\,x^2 + 160\,x + 80$ &   \\[.1cm]
$S_{256E}$  &  $\,\,\,y^2 \,=\,2\,x^5 - 128\,x $& yes \\[.1cm]
$S_{261A}$  &  $\,\,\,y^2 \,=x^6 - 6x^4 + 10x^3 + 21x^2-30x + 9$& yes \\[.1cm]
$S_{261B}$  &  $\,\,\,y^2 \,=-3x^6 + 18x^4 + 30x^3 - 63x^2 - 90x - 27$& yes \\[.1cm]
$S_{261D}$  &  $\,\,\,y^2 \,=-3x^6 + 6x^4 - 18x^3 + 33x^2 - 18x + 9$& yes \\[.1cm]
$S_{262C}$  &  $\,\,\,y^2 \,=-8x^5 + 56x^4 - 82x^3 - 312x^2 - 264x - 64$& yes \\[.1cm]
$S_{266B}$  &  $\,\,\,y^2 \,=\,8\,x^6 + 16\,x^5 + 13\,x^4 + 6\,x^3 - 19\,x^2 - 8\,x - 16 $& yes \\[.1cm]
$S_{268C}$  &  $\,\,\,y^2 \,=x^6 - 2x^5 + x^4 - 4x^3 + 2x^2 + 4x + 1$& yes \\[.1cm]
$S_{275G}$  &  $\,\,\,y^2 \,=-3x^6 - 2x^5 + x^4 - 14x^3 + 2x^2 - 8x + 1$& yes \\[.1cm]
$S_{279A}$  &  $\,\,\,y^2 \,=\,-3\,x^6 - 6\,x^4 - 18\,x^3 - 15\,x^2 + 18\,x - 3 $& yes \\[.1cm]
$S_{279B}$  &  $\,\,\,y^2 \,=\,-3\,x^6 + 6\,x^5 - 3\,x^4 - 6\,x^3 + 18\,x^2 - 12\,x + 9$& yes \\[.1cm]
$S_{287A}$  &  $\,\,\,y^2 \,=x^6 + 2x^5 - 3x^4 - 6x^3 - 10x^2 - 4x - 3$& yes \\[.1cm]
$S_{292A}$  &  $\,\,\,y^2 \,=-x^6 - 2x^5 - 4x^4 - 4x^3 - 3x^2 - 2x + 1$& yes \\[.1cm]
$S_{297E}$  &  $\,\,\,y^2 \,=\,x^6 - 12\,x^4 - 8\,x^3 + 12\,x^2 - 12\,x + 4$& yes \\[.1cm]
$S_{297F}$  &  $\,\,\,y^2 \,=\,-3\,x^6 + 36\,x^4 - 24\,x^3 - 36\,x^2 - 36\,x - 12 $& yes \\[.1cm]
$S_{299A}$  &  $\,\,\,y^2 \,=-3x^6 - 10x^5 - 7x^4 + 6x^3 + 6x^2 - 4x + 1$& yes \\[.1cm]
$S_{325H}$  &  $\,\,\,y^2 \,= \,-75\,x^6 + 180\,x^4 + 150\,x^3 + 360\,x^2 - 195 $& yes \\[.1cm]
$S_{335B}$  &  $\,\,\,y^2 \,=x^6 - 4x^5 - 48x^2 - 20x - 4$& yes \\[.1cm]
$S_{345G}$  &  $\,\,\,y^2 \,=\,x^6 - 12\,x^5 + 32\,x^4 + 24\,x^3 + 8\,x^2 - 12\,x + 4$& yes \\[.1cm]
$S_{351A}$  &  $\,\,\,y^2 \,=x^6 - 6x^4 + 18x^3 + 9x^2 - 18x + 5$ & yes \\[.1cm]
$S_{351C}$  &  $\,\,\,y^2 \,=\,-3\,x^6 + 18\,x^4 + 54\,x^3 - 27\,x^2 - 54\, x- 15$& yes \\[.1cm]
$S_{351D}$  &  $\,\,\,y^2 \,=\,21\,x^6 - 210\,x^5 + 525\,x^4 - 602\,x^3 + 714\,x^2 + 336\,x + 665  $&   \\[.1cm]
$S_{357E}$  &  $\,\,\,y^2 \,=x^6 + 8x^4 - 8x^3 + 20x^2 - 12x + 12$& yes \\[.1cm]
$S_{375C}$  &  $\,\,\,y^2 \,=\,105\,x^6 + 240\,x^5 + 550\,x^4 + 450\,x^3 + 325\,x^2 + 90\,x - 155  $& yes \\[.1cm]
$S_{376A}$  &  $\,\,\,y^2 \,=\,-x^5 - x^4 + 3\,x^3 + 3\,x^2 - 4\,x + 1$ & yes \\[.1cm]
$S_{376B}$  &  $\,\,\,y^2 \,=x^5 - x^3 + 2x^2 - 2x + 1$& yes \\[.1cm]
$S_{380D}$  &  $\,\,\,y^2 \,=x^5 - 7x^3 - 4x^2 + 5x + 5$& yes \\[.1cm]
$S_{387F}$  &  $\,\,\,y^2 \,=-12x^6 + 162x^3 + 324$&  \\[.1cm]
$S_{389B}$  &  $\,\,\,y^2 \,=x^6 + 10x^5 + 23x^4 - 20x^3 - 45x^2 + 46x - 11$& yes \\[.1cm]
$S_{391A}$  &  $\,\,\,y^2 \,=x^6 + 10x^4 - 6x^3 - 11x^2 + 18x - 7$& yes \\[.1cm]
$S_{424A}$  &  $\,\,\,y^2 \,=x^6 - 2x^5 + 6x^4 - 8x^3 + 10x^2 - 8x + 5  $& yes \\[.1cm]
$S_{440E}$  &  $\,\,\,y^2 \,=x^5 + 2x^3 - 11x^2 - 8x - 24$& yes \\[.1cm]
$S_{440G}$  &  $\,\,\,y^2 \,=x^5 - 2x^3 - 7x^2 - 8x + 8$& yes \\[.1cm]
$S_{441I}$  &  $\,\,\,y^2 \,=\,-3\,x^6 + 12\,x^4 + 6\,x^3 - 24\,x^2 - 36\,x - 27$& yes \\[.1cm]
$S_{464I}$  &  $\,\,\,y^2 \,=-x^6 - 2x^5 - 7x^4 - 6x^3 - 13x^2 - 4x - 8$& yes \\[.1cm]
$S_{476B}$  &  $\,\,\,y^2 \,=x^5 + 2x^4 + 3x^3 + 6x^2 + 4x + 1 $& yes \\[.1cm]
$S_{476D}$  &  $\,\,\,y^2 \,=x^5 - 2x^4 + 3x^3 - 6x^2 - 7$& yes \\[.1cm]
$S_{483C}$  &  $\,\,\,y^2 \,=\,x^6 + 12\,x^5 + 26\,x^4 - 34\,x^3 - 67\,x^2 + 90\,x - 27$ & yes \\[.1cm]
$S_{488,A}$  &  $\,\,\,y^2 \,=-3x^6 + 18x^5 - 27x^4 - 12x^3 - 27x^2-36x - 24$& yes \\[.1cm]
\end{longtable}
\end{center}

\providecommand{\bysame}{\leavevmode\hbox to3em{\hrulefill}\thinspace}

\end{document}